\newtheorem{thm}{Theorem}
\newtheorem{prop}{Proposition}
\newtheorem{lem}{Lemma}
\newtheorem{prob}{Problem}
\theoremstyle{remark}
\newtheorem{rem}{Remark}
\theoremstyle{definition}
\newcommand{\C}{\mathbb{ C}}
\newcommand{\OO}{\mathcal{ O}}
\newcommand{\PP}{\mathbb{ P}}
\newcommand{\Z}{\mathbb{ Z}}
\title[Chern numbers and diffeomorphism types]{Chern numbers and diffeomorphism types of projective varieties}
\author{D.~Kotschick}
\address{Mathematisches Institut, Ludwig-Maximilians-Universit\"at M\"unchen,
Theresienstr.~39, 80333 M\"unchen, Germany}
\email{dieter@member.ams.org}
\date{December 20, 2007; MSC 2000: primary 57R20, secondary 14J30, 14J35, 32Q55}
\begin{document}

\maketitle

\begin{center}
{\it Herrn Prof.~Dr.~F.~Hirzebruch zum 80.~Geburtstag gewidmet.}
\end{center}

\bigskip

\begin{abstract}
In 1954 Hirzebruch asked which linear combinations of Chern numbers are topological invariants of 
smooth complex projective varieties. We give a complete answer to this question in small dimensions,
and also prove partial results without restrictions on the dimension.
\end{abstract}

\section{Introduction}

More than fifty years ago, Hirzebruch raised the question to what extent the Chern and 
Hodge numbers of projective algebraic manifolds are topologically invariant, 
see Problem~31 in~\cite{Hir1}. He noted that Chern numbers of almost complex 
manifolds are not topologically invariant simply because there are too many 
almost complex structures, even on a fixed manifold. Then, in 1959, Borel and 
Hirzebruch gave an example of a $10$-dimensional closed oriented manifold with 
two projective algebraic structures for which $c_{1}^{5}$ are different, 
see~\cite{BH} Section~24.11. Until 1987, when the commentary in~\cite{Hir2} was 
written, nothing further was discovered concerning this question. In particular, 
Hirzebruch wrote then that he did not know whether $c_{1}^{3}$ and $c_{1}^{4}$ 
are topological invariants of complex three- and four-folds respectively.

In this paper we prove that in complex dimension $3$ the only linear combinations
of the Chern numbers $c_1^3$, $c_1c_2$ and $c_3$ that are invariant under 
orientation-preserving diffeomorphims of simply connected projective algebraic manifolds 
are the multiples of the Euler characteristic $c_3$. In dimension $4$ the only linear 
combinations of Chern numbers that are invariant 
are the linear combinations of the 
Euler characteristic $c_4$ and of the Pontryagin numbers $p_1^2$ and  $p_2$.
We also prove some partial results in arbitrary dimensions.

These results stem from the fact that in complex dimension $2$ the Chern number 
$c_1^2$ is not invariant under orientation-reversing homeomorphisms; cf.~\cite{MAorient}. 
By suitable stabilisation of the counterexamples, we find enough examples at least in dimensions
$3$ and $4$ to detect the independent variation of all Chern numbers which are not 
combinations of the Euler and  Pontryagin numbers. Our results suggest a weaker form of 
Hirzebruch's problem, asking whether the topology determines the Chern 
numbers up to finite ambiguity. We have no counterexample to an affirmative 
answer to this weaker question in the projective algebraic case, although it is known 
to be false for non-K\"ahler complex manifolds; cf.~\cite{LeB}.

\section{Preliminary results}

\subsection{Complex surfaces}

For complex surfaces there are two Chern numbers, $c_{2}$ and 
$c_{1}^{2}$, which turn out to be diffeomorphism invariants even without 
assuming that the diffeomorphism is orientation-preserving with respect to
the orientations given by the complex structures:
\begin{thm}\label{t:BLMS}
    If two compact complex surfaces are diffeomorphic, then their 
    Chern numbers coincide.
    \end{thm}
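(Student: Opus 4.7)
The Chern number $c_2$ equals the topological Euler characteristic, hence is a homotopy invariant and is automatically preserved by any diffeomorphism, regardless of orientation. The real content of the theorem therefore lies in the invariance of $c_1^2$. In complex dimension $2$, Hirzebruch's signature formula gives $p_1 = c_1^2 - 2c_2$ together with $p_1 = 3\sigma$, so
\[
c_1^2 \;=\; 3\sigma + 2c_2.
\]
Given the invariance of $c_2$, invariance of $c_1^2$ under any diffeomorphism is equivalent to invariance of the signature $\sigma$.

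If the diffeomorphism $f\colon X\to Y$ preserves orientation, then $\sigma(X)=\sigma(Y)$ holds automatically and we are done. The substantive case is when $f$ reverses orientation. Letting $M$ denote the smooth $4$-manifold underlying $X$, the map $f$ transports the complex structure on $Y$ to one on $\overline{M}$, showing that $M$ carries complex structures compatible with \emph{both} of its orientations.

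The plan is then to invoke the four-manifold-theoretic result that a closed smooth $4$-manifold admitting complex structures in both of its orientations must have vanishing signature. This would be proved case-by-case along the Enriques--Kodaira classification, using Seiberg--Witten theory to rule out surfaces of general type, properly elliptic surfaces with $p_g > 0$, and others as soon as the canonical class is non-trivial, and verifying directly that $\sigma=0$ for the remaining classes (ruled surfaces, Hopf surfaces and their analogues). Granting this vanishing, $\sigma(X)=0=\sigma(Y)$ and hence $c_1^2(X)=2c_2(X)=2c_2(Y)=c_1^2(Y)$. The main obstacle is precisely this signature-vanishing step: the reduction to $\sigma$ is elementary, but forcing $\sigma=0$ from the existence of complex structures for both orientations requires genuine smooth four-manifold theory --- and it is exactly this smoothness input that is missing in the topological setting, which is why \cite{MAorient} can exhibit failures of $c_1^2$-invariance already under orientation-reversing homeomorphisms.
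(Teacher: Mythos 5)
Your proposal is correct and follows essentially the same route as the paper: reduce everything to the signature via Wu's formula $c_1^2 = 2e + 3\sigma$, observe that the orientation-preserving case is then immediate, and for the orientation-reversing case invoke the Seiberg--Witten-based vanishing of the signature (Theorem~2 of~\cite{BLMSorient}), which is exactly the result the paper cites. Your closing remark on why the smooth input is essential --- and why invariance fails for orientation-reversing homeomorphisms as in~\cite{MAorient} --- matches the paper's own discussion.
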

\begin{proof}
    In this case $c_{2}$ is the topological Euler 
    characteristic $e$. By Wu's formula we have
    \begin{equation}\label{eq:Wu}
	c_{1}^{2}(X) = 2e(X)+p_{1}(X) \ .
	\end{equation}
    The first Pontryagin number is $3$ times the signature, and so 
    the right-hand side of~\eqref{eq:Wu} is invariant not just under
    orientation-preserving diffeomorphisms, but even under 
    orientation-preserving homotopy equivalences\footnote{All this was 
    known in 1954, and Hirzebruch~\cite{Hir1} remarked that the 
    Chern numbers of an algebraic surface are topological invariants 
    (of the underlying oriented manifold).}.
    
    Now suppose that two compact complex surfaces  are 
    orientation-reversing diffeomorphic, with respect to the 
    orientations defined by their complex structures. Then, using 
    Seiberg--Witten theory, I proved in 1995, see Theorem~2 
    of~\cite{BLMSorient}, that the signatures of these surfaces vanish. 
    Thus, their Chern numbers agree by~\eqref{eq:Wu}.
    \end{proof}
The statement about orientation-reversing diffeomorphisms concerns projective 
algebraic surfaces only, because, by the classification of complex surfaces, 
a complex surface with positive signature is always projective.

As we saw in the proof, $c_{1}^{2}$ is invariant under 
orientation-preserving homotopy equivalences, and under 
orientation-reversing diffeomorphisms. But it is not invariant under 
orientation-reversing homeomorphisms. Already in 1991, I had proved 
the following:
\begin{thm}[\cite{MAorient}]\label{t:MA}
There are infinitely many pairs of simply connected projective algebraic surfaces 
$X_{i}$ and $Y_{i}$ of non-zero signature which are orientation-reversing 
homeomorphic.
\end{thm}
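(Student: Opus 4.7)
The plan is to combine Freedman's topological classification of simply connected $4$-manifolds with the geography of simply connected projective surfaces of general type. By Freedman's theorem, two simply connected closed smooth oriented $4$-manifolds are orientation-preservingly homeomorphic if and only if their intersection forms are isomorphic integral unimodular forms, so $X_i$ and $Y_i$ are orientation-reversing homeomorphic precisely when $Q_{X_i}\cong -Q_{Y_i}$. Since non-zero signature (with $b_2\geq 2$) forces the forms to be indefinite, the classification of indefinite unimodular forms reduces the task to finding infinitely many pairs of simply connected projective surfaces with $b_2(X_i)=b_2(Y_i)$, $\sigma(X_i)=-\sigma(Y_i)\ne 0$, and intersection forms of the same parity.

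To produce such pairs, I would start with two simply connected minimal projective surfaces of general type $X$ and $Y$ with invariants $(b^+(X), b^-(X))=(\al,\be)$ and $(b^+(Y), b^-(Y))=(\ga, \de)$ satisfying $\al>\de$, $\ga>\be$, and $\al\ne\ga$, and set
\[
X_i = X \# (\ga-\be)\blowup, \qquad Y_i = Y \# (\al-\de)\blowup.
\]
These remain simply connected projective surfaces; each $\blowup$-summand contributes a $\langle -1\rangle$ to the intersection form, so both forms become odd. The Betti numbers are $(b^+(X_i), b^-(X_i))=(\al,\ga)$ and $(b^+(Y_i), b^-(Y_i))=(\ga,\al)$, so by the classification of odd indefinite unimodular forms $Q_{X_i}\cong -Q_{Y_i}$. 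Freedman's theorem then produces the orientation-reversing homeomorphism, and the signature $\al-\ga$ is non-zero.

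The hard part is exhibiting an infinite supply of admissible minimal pairs $(X,Y)$. The Bogomolov--Miyaoka--Yau inequality $c_1^2\le 3c_2$ forces $b^+\le 2b^-+1$ for simply connected minimal surfaces of general type, limiting the admissible asymmetries between $(b^+(X), b^-(X))$ and $(b^+(Y), b^-(Y))$. To overcome this, I would appeal to the geography of simply connected surfaces of general type: infinite families with prescribed Betti numbers can be constructed by combining surfaces on or near the Noether line (for instance, Horikawa-type double plane surfaces) with families of simply connected surfaces of positive index (as constructed by Persson, Chen, and their successors). Careful bookkeeping of Betti numbers along these families yields infinitely many pairs $(X,Y)$ satisfying the required inequalities, and the blow-up recipe above converts each into an orientation-reversing homeomorphic pair of simply connected projective surfaces of non-zero signature.
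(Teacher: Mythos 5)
Your proposal follows essentially the same route as the paper, which simply quotes this theorem from \cite{MAorient} and notes that the proof rests on Freedman's classification together with geography results for simply connected surfaces of general type due to Persson and Chen; your reduction via the classification of odd indefinite unimodular forms and the blow-up trick to equalize Betti numbers is exactly the mechanism used there. The only step you leave to citation --- the infinite supply of minimal pairs with the required $(b^+,b^-)$ asymmetries, in particular simply connected surfaces of general type with positive signature --- is precisely what the Persson--Chen geography results supply, so the argument is complete at the same level of detail as the paper's own treatment.
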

The proof is based on geography results for surfaces of general type due to 
Persson and Chen. The surfaces $X_i$ and $Y_i$ are projective algebraic 
because they are of general type. They can be chosen to contain embedded 
holomorphic spheres, in which case they can not be orientation-reversing 
diffeomorphic, although they are orientation-reversing homeomorphic.
This was the motivation for the results of~\cite{BLMSorient} quoted in the proof of Theorem~\ref{t:BLMS}.
    
    By Wu's formula~\eqref{eq:Wu} the homeomorphic surfaces $X_i$ and $Y_i$
    have different $c_{1}^{2}$. Indeed, the homeomorphism in question 
    preserves the Euler number and reverses the sign of the signature, 
    so that~\eqref{eq:Wu} gives:
    \begin{equation}\label{eq:c12}
	c_{1}^{2}(Y_{i})=4e(X_{i})-c_{1}^{2}(X_{i}) \ .
	\end{equation}
Wu's formula~\eqref{eq:Wu} shows in particular that the unoriented homeomorphism 
type almost determines the Chern numbers of a compact complex surface: 
there are only two possible values for $c_{1}^{2}$ (and only one for 
$c_{2}$, of course).

We shall use the examples from Theorem~\ref{t:MA} as building blocks for our 
high-dimensional examples.

\subsection{Inductive formulae for Chern classes}

We require the following easy calculation.
\begin{lem}\label{l:products}
    Let $A$ be a compact complex $n$-fold, and $B=A\times\C P^{1}$. 
    Then the Chern numbers of $B$ are 
	\begin{equation}\label{eq:Chern2}
	    c_{r_{1}}\ldots c_{r_{k}}(B) = 2\sum_{j=1}^{k}c_{r_{1}}\ldots 
	    c_{r_{j}-1}\ldots c_{r_{k}}(A) \ .
	    \end{equation}
    \end{lem}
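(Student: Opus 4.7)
The plan is to use the Whitney product formula applied to the natural splitting of the tangent bundle of a product. Writing $\pi_A\colon B \to A$ and $\pi_{\C P^1}\colon B \to \C P^1$ for the projections, we have $TB \cong \pi_A^*TA \oplus \pi_{\C P^1}^* T\C P^1$, so the total Chern class of $B$ factors as
\begin{equation*}
c(B) = \pi_A^* c(A) \cdot \pi_{\C P^1}^* c(\C P^1) = \pi_A^* c(A)\cdot (1+2h),
\end{equation*}
where $h \in H^2(\C P^1;\Z)$ is the positive generator, normalised so that $\int_{\C P^1} h = 1$, and we have used $c_1(T\C P^1) = 2h$. Separating by degree, this gives
\begin{equation*}
c_r(B) = \pi_A^* c_r(A) + 2\, \pi_A^* c_{r-1}(A) \cdot \pi_{\C P^1}^* h
\end{equation*}
for every $r$.

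Next I would substitute this expression into the product $c_{r_1}(B)\cdots c_{r_k}(B)$ and expand. Since $h^2 = 0$ in $H^*(\C P^1;\Z)$, any term involving two or more $h$-factors vanishes. The term with no $h$-factor lives in $\pi_A^* H^{2n}(A)$ and therefore also pairs trivially with the fundamental class of $B$ for dimensional reasons (there is no $\C P^1$-factor left to integrate over). Hence only the terms in which exactly one of the $k$ product factors contributes its $h$-piece survive. For each choice of $j \in \{1,\ldots,k\}$ this gives
\begin{equation*}
2\, \pi_A^*\!\left( c_{r_1}(A)\cdots c_{r_j-1}(A)\cdots c_{r_k}(A)\right) \cdot \pi_{\C P^1}^* h.
\end{equation*}

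Finally I would evaluate on the fundamental class $[B] = [A]\times [\C P^1]$. Using Fubini, the $\pi_{\C P^1}^* h$ factor integrates to $1$ over $\C P^1$, leaving the Chern number $c_{r_1}\ldots c_{r_j-1}\ldots c_{r_k}(A)$. Summing over $j=1,\ldots,k$ yields the claimed formula \eqref{eq:Chern2}. There is no real obstacle here: the computation is a routine application of Whitney's formula combined with the fact that $h^2 = 0$ on $\C P^1$; the only point requiring a little care is the bookkeeping that exactly one $h$-factor can (and must) appear in every surviving monomial.
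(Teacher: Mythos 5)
Your proof is correct and follows essentially the same route as the paper: both use the Whitney sum decomposition $c(TB)=c(TA)\,c(T\C P^{1})$ to write $c_{r}(B)=c_{r}(A)+2c_{r-1}(A)h$ and then expand the product, keeping only the terms with exactly one $h$-factor. You merely spell out the final expansion and evaluation step that the paper leaves implicit.
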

\begin{proof}
    The Whitney sum formula $c(TB)=c(TA)c(T\C P^{1})$ for the total Chern classes implies that,
    with respect to the K\"unneth decomposition of the cohomology of $B$, the Chern classes of $B$ are 
	\begin{equation*}
	    \begin{split}
	    c_{1}(B) &= c_{1}(A)+c_{1}(\C P^{1})\\
	    c_{2}(B) &= c_{2}(A)+c_{1}(A)c_{1}(\C P^{1})\\
	    &\ldots\\
	    c_{n}(B) &= c_{n}(A)+c_{n-1}(A)c_{1}(\C P^{1})\\
	    c_{n+1}(B) &= c_{n}(A)c_{1}(\C P^{1}) \ .
	    \end{split}
	    \end{equation*}
	    The claim follows using that the first Chern number of $\C P^{1}$ equals $2$.
    \end{proof}
We also need the following generalization of Lemma~\ref{l:products} to non-trivial $\C P^1$-bundles:
\begin{lem}
Let $B$ be a compact complex surface and $E\longrightarrow B$ a holomorphic vector bundle
of rank two. Then the projectivisation $X=\PP(E)$ has 
\begin{equation}\label{eq:proj}
\begin{split}
c_3(X) &=2c_2(B) \ ,\\ 
c_1c_2(X) &=2(c_1^2(B)+c_2(B)) \ ,\\
c_1^3(X) &=6c_1^2(B)+2p_1(\PP(E)) \ .
\end{split}
\end{equation}
\end{lem}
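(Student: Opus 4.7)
The plan is to apply the two standard exact sequences for the projective bundle $\pi \colon X = \PP(E) \to B$ and then push everything forward to $B$. First, the relative tangent sequence
\[
0 \lra T_{X/B} \lra TX \lra \pi^* TB \lra 0
\]
gives the multiplicative relation $c(TX) = c(T_{X/B}) \cdot \pi^* c(TB)$. Second, the relative Euler sequence $0 \to \OO_X \to \pi^* E \otimes \OO_X(1) \to T_{X/B} \to 0$ shows that the line bundle $T_{X/B}$ has $c_1(T_{X/B}) = 2h + \pi^* c_1(E)$, where $h = c_1(\OO_X(1))$. Together these express $c_1(X)$, $c_2(X)$ and $c_3(X)$ as explicit polynomials in $h$, $\pi^* c_1(E)$, $\pi^* c_1(B)$ and $\pi^* c_2(B)$.

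The Chern numbers are then evaluated by integrating over $X$. The essential tool is the Grothendieck relation
\[
h^2 + \pi^* c_1(E) \cdot h + \pi^* c_2(E) = 0,
\]
equivalently the push-forward formulas $\pi_*(1) = 0$, $\pi_*(h) = 1$, $\pi_*(h^2) = -c_1(E)$, combined with the projection formula and the vanishing of any pullback class of cohomological degree exceeding $2\dim_\C B = 4$. With these in hand, each of the three Chern numbers of $X$ reduces to an integral over $B$ after a finite multinomial expansion.

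For $c_3(X)$ and $c_1 c_2(X)$, most contributions vanish on dimension grounds and the surviving ones cancel in pairs, leaving the clean expressions $2c_2(B)$ and $2(c_1^2(B) + c_2(B))$. For $c_1^3(X)$, the same mechanism produces the $6c_1^2(B)$ piece from the terms containing $\pi^* c_1(B)$, plus a residual $2(c_1^2(E) - 4c_2(E))$ coming from the pure $h$, $\pi^* c_1(E)$ terms. One then identifies this residual as $p_1(\PP(E))$: a second application of the Grothendieck relation gives $c_1(T_{X/B})^2 = (2h + \pi^* c_1(E))^2 = \pi^*\bigl(c_1^2(E) - 4c_2(E)\bigr)$, so that $c_1^2(E) - 4c_2(E)$ is exactly the Pontryagin number of the relative tangent line bundle, pushed down to $B$.

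The calculations are almost entirely mechanical once the two exact sequences and the Grothendieck relation are set up; the only real care needed is bookkeeping which pullback terms vanish for dimensional reasons and which cancel, and correctly matching the surviving piece in $c_1^3$ against the Pontryagin invariant of the projective bundle. There is no deeper obstacle.
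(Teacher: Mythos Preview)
Your approach is correct and essentially coincides with the paper's for $c_1^3(X)$: the paper also writes $c_1(X)=c_1(B)+c_1(E)+2y$ and cubes using the Grothendieck relation $y^2+c_1(E)y+c_2(E)=0$, exactly as you outline. The difference is in how you handle $c_3(X)$ and $c_1c_2(X)$. You compute these by the same push-forward machinery, expanding and cancelling; the paper instead observes that $c_3$ is the topological Euler characteristic, which is multiplicative in fiber bundles, and that $\tfrac{1}{24}c_1c_2$ is the Todd genus in dimension~$3$, which is likewise multiplicative, so both formulae drop out with no calculation at all. Your route is more uniform and self-contained (one mechanism for all three numbers), while the paper's shortcut for the first two numbers is slicker and explains \emph{why} those two answers are so clean---they are genus values and hence cannot see the twisting of the bundle. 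One small point: your identification of $c_1^2(E)-4c_2(E)$ with $p_1(\PP(E))$ via $c_1(T_{X/B})^2$ is fine computationally, but the paper simply takes $p_1(\PP(E))=c_1^2(E)-4c_2(E)$ as the definition of the $SO(3)=PU(2)$ Pontryagin number of the sphere bundle, which is the more standard viewpoint.
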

Here $p_1(\PP(E))=c_1^2(E)-4c_2(E)$ is the first Pontryagin number for the group $SO(3)=PU(2)$, which 
is the structure group of the sphere bundle $X\longrightarrow B$. Notice that in the case that $p_1(\PP(E))=0$,
the formulae reduce to those obtained for the trivial bundle.
\begin{proof}
The formulae for $c_3$ and for $c_1c_2$ are immediate from the multiplicativity of the topological Euler characteristic and 
of the Todd genus, recalling that the Todd genera in dimension $2$ and $3$ are $\frac{1}{12}(c_1^2+c_2)$,
respectively $\frac{1}{24}c_1c_2$. To compute $c_1^3$ note that by the Leray-Hirsch theorem the cohomology ring of $X$ is 
generated as a $H^*(B)$-module by a class $y\in H^2(X)$ restricting as a generator to every fiber and satisfying the relation
$$
y^2+c_1(E)y+c_2(E) = 0 \ .
$$
Moreover, $c_1(X)=c_1(B)+c_1(E)+2y$ because the vertical tangent bundle has first Chern class $c_1(E)+2y$.
The third power is computed straightforwardly using the relation and the fact that $y$ evaluates to $1$ on the fiber.
\end{proof}

\section{Complex three-folds}

A variant of Hirzebruch's problem for three-folds was taken up by LeBrun 
in 1998, see~\cite{LeB}, who proved that there are closed $6$-manifolds 
which admit complex structures with different $c_{1}c_{2}$ and $c_{1}^{3}$. 
He even proved that a fixed manifold can have complex structures 
realising infinitely many different values for $c_{1}c_{2}$. However, 
for all the examples discussed in~\cite{LeB} only one of the complex 
structures is projective algebraic, or at least K\"ahler, and all the 
others are non-K\"ahler. Therefore, these examples say nothing about 
the topological invariance of Chern numbers for projective algebraic 
three-folds.

Nevertheless, both $c_{1}c_{2}$ and $c_{1}^{3}$ are not diffeomorphism invariants of projective 
three-folds:
\begin{prop}\label{t:3folds}
    There are infinitely many pairs of projective algebraic 
    three-folds $Z_{i}$ and $T_{i}$ with the following properties:
    \begin{enumerate}
	\item[(i)] Each $Z_{i}$ and $T_{i}$ admits an orientation-reversing 
	diffeomorphism.
	\item[(ii)] For each $i$ the manifolds underlying $Z_{i}$ and $T_{i}$ are 
	diffeomorphic.
	\item[(iii)] For each $i$ one has $c_{1}c_{2}(Z_{i})\neq c_{1}c_{2}(T_{i})$ 
	and $c_{1}^{3}(Z_{i})\neq c_{1}^{3}(T_{i})$.
	\end{enumerate}
    \end{prop}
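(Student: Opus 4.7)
The plan is to use the surfaces from \thmref{t:MA} as building blocks and set $Z_i = X_i \times \C P^1$ and $T_i = Y_i \times \C P^1$. These are simply connected smooth projective algebraic three-folds.

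Property (i) is essentially free: complex conjugation $c \colon \C P^1 \to \C P^1$ is orientation-reversing, so $(\mathrm{id}_{X_i}, c)$ is an orientation-reversing self-diffeomorphism of $Z_i$, and similarly for $T_i$.

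For (ii), the orientation-reversing homeomorphism $X_i \to Y_i$ provided by \thmref{t:MA} is the same data as an orientation-preserving homeomorphism $X_i \to \bar{Y}_i$. Thus $X_i$ and $\bar{Y}_i$ are simply connected smooth four-manifolds with isomorphic intersection forms, so by Wall's theorem they bound a smooth $h$-cobordism $W_i$. The product $W_i \times \C P^1$ is a simply connected smooth $h$-cobordism of dimension seven, which is a product by the $h$-cobordism theorem of Smale, yielding a diffeomorphism $X_i \times \C P^1 \cong \bar{Y}_i \times \C P^1$. Since $\bar{Y}_i \times \C P^1$ is the same underlying smooth manifold as $Y_i \times \C P^1$ (an explicit orientation-preserving diffeomorphism is given by complex conjugation on the $\C P^1$ factor), we conclude $Z_i \cong T_i$ as smooth manifolds.

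For (iii), \lemref{l:products} yields $c_1 c_2(Z_i) = 2(c_1^2(X_i) + e(X_i))$ and $c_1^3(Z_i) = 6\, c_1^2(X_i)$, with analogous formulas for $T_i$ in terms of $Y_i$. The Euler numbers satisfy $e(X_i) = e(Y_i)$, while~\eqref{eq:c12} gives
$$ c_1^2(X_i) - c_1^2(Y_i) = 2 c_1^2(X_i) - 4 e(X_i) = 6 \sigma(X_i) \ , $$
which is non-zero because $X_i$ has non-zero signature by \thmref{t:MA}. Hence both $c_1 c_2$ and $c_1^3$ distinguish $Z_i$ from $T_i$.

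The main obstacle is step (ii): the pair $X_i, Y_i$ is only known to be orientation-reversingly \emph{homeomorphic}, and typically is not diffeomorphic. One must therefore upgrade this purely topological equivalence of $X_i$ and $\bar{Y}_i$ into a smooth diffeomorphism after stabilising by $\C P^1$. This requires combining Wall's smooth $h$-cobordism theorem for simply connected four-manifolds with equal intersection forms and the smooth $h$-cobordism theorem of Smale in dimension seven.
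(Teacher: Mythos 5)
Your proposal is correct and follows essentially the same route as the paper: the same products $X_i\times\C P^1$ and $Y_i\times\C P^1$, the same use of an h-cobordism between $X_i$ and $\bar Y_i$ crossed with $\C P^1$ and trivialised by Smale's theorem, and the same Chern number computation via Lemma~\ref{l:products} and~\eqref{eq:c12}. The only (harmless) difference is that you make explicit the appeal to Wall's theorem to produce the smooth h-cobordism, which the paper leaves implicit.
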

    \begin{proof}
    Let $X_{i}$ and $Y_{i}$ be the algebraic surfaces from 
    Theorem~\ref{t:MA}, constructed in~\cite{MAorient}, and take 
    $Z_{i}=X_{i}\times\C P^{1}$ and $T_{i}=Y_{i}\times\C P^{1}$. Then 
    the identity on the first factor times complex conjugation on the 
    second factor gives an orientation-reversing selfdiffeomorphism of 
    $Z_{i}$ and of $T_{i}$.
    
    Denote by $\bar Y_{i}$ the smooth manifold underlying $Y_{i}$, 
    but endowed with the orientation opposite to the one induced by 
    the complex structure. Then $X_{i}$ and $\bar Y_{i}$ are 
    orientation-preserving homeomorphic simply connected smooth 
    four-manifolds, and are therefore h-cobordant. If $W$ is an 
    h-cobordism between them, then $W\times S^{2}$ is an h-cobordism 
    between $Z_{i}$ and $\bar T_{i} = \bar Y_{i} \times \C P^{1}$. By 
    Smale's h-cobordism theorem, $Z_{i}$ and $\bar T_{i}$ are 
    orientation-preserving diffeomorphic. As $Z_{i}$ and $T_{i}$ admit 
    orientation-reversing diffeomorphisms, we conclude that they are both 
    orientation-preserving and orientation-reversing diffeomorphic.
    
    For the Chern numbers~\eqref{eq:Chern2} gives 
    \begin{equation}\label{eq:Chern3}
	\begin{split}
    c_{1}c_{2}(Z_{i}) &=2(c_{1}^{2}+c_{2})(X_{i}) \ , \\
    c_{1}^{3}(Z_{i}) &=6c_{1}^{2}(X_{i}) \ ,
    \end{split}
    \end{equation}
    and similarly for $T_{i}$ and $Y_{i}$. As $X_{i}$ and $Y_{i}$ 
    have the same $c_{2}$ but different $c_{1}^{2}$, we conclude that 
    $Z_{i}$ and $T_{i}$ have different $c_{1}c_{2}$ and different 
    $c_{1}^{3}$.
    \end{proof}
Thus $c_{1}c_{2}$ and $c_{1}^{3}$ are not topological invariants 
of projective three-folds, but it is not yet clear that they vary 
independently. This is the content of the following:
\begin{thm}\label{tt:3folds}
The only linear combinations of the Chern numbers $c_1^3$, $c_1c_2$ and $c_3$ that 
are invariant under orientation-preserving diffeomorphisms of simply connected projective 
algebraic three-folds are the multiples of the Euler characteristic $c_3$. 
\end{thm}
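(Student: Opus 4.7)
The plan is to show that any invariant combination $\alpha c_1^3 + \beta c_1 c_2 + \gamma c_3$ must have $\alpha = \beta = 0$ by producing two linearly independent families of diffeomorphic pairs of simply connected projective three-folds.

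First I would read off a relation directly from Proposition~\ref{t:3folds}: the diffeomorphic pairs $(Z_i, T_i)$ satisfy, by the formulas~\eqref{eq:Chern3}, $\Delta c_3 = 0$, $\Delta c_1 c_2 = 2\delta_i$, and $\Delta c_1^3 = 6\delta_i$, where $\delta_i := c_1^2(X_i) - c_1^2(Y_i) \neq 0$. Invariance therefore forces $6\alpha + 2\beta = 0$, i.e.\ $\beta = -3\alpha$, and the remaining task is to rule out the combination $c_1^3 - 3 c_1 c_2$.

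For this I would produce a second family using the projectivisation formulas~\eqref{eq:proj} with a non-trivial $p_1$. Fix a holomorphic line bundle $\mathcal{L}$ on $X_i$ with self-intersection $P := \langle c_1(\mathcal{L})^2, [X_i] \rangle \neq 0$, which is easy to arrange for $X_i$ of general type with large $b_2$. By Freedman combined with Wall, the orientation-preserving homeomorphism $X_i \to \bar Y_i$ provided by Theorem~\ref{t:MA} is realised by a smooth $h$-cobordism $W$; extend $\mathcal{L}$ to a line bundle on $W$, and let $\mathcal{L}'$ be its restriction to $\bar Y_i$, viewed holomorphically on $Y_i$. Set $E = \OO_{X_i} \oplus \mathcal{L}$ and $F = \OO_{Y_i} \oplus \mathcal{L}'$, so that $\PP(E)$ and $\PP(F)$ are simply connected projective three-folds. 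The rank-two bundle $\OO_W \oplus \mathcal{L}$ then projectivises to a simply connected seven-dimensional $h$-cobordism between $\PP(E)$ and $\PP(F)$, which by Smale's theorem is a product, giving the desired diffeomorphism of six-manifolds.

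Finally I would compute the Chern-number difference of the new pair. The formulas~\eqref{eq:proj} still yield $\Delta c_3 = 0$ and $\Delta c_1 c_2 = 2\delta_i$, but $p_1(\PP(E)) = P$ while $p_1(\PP(F)) = -P$: indeed $c_1(\mathcal{L})^2$ and $c_1(\mathcal{L}')^2$ restrict from a common class on $W$, forcing $\langle c_1(\mathcal{L}')^2, [\bar Y_i] \rangle = P$, while the complex orientation of $Y_i$ satisfies $[Y_i] = -[\bar Y_i]$ and reverses the sign. Hence $\Delta c_1^3 = 6\delta_i + 4P$, and combining with $\beta = -3\alpha$ collapses invariance to $4\alpha P = 0$, forcing $\alpha = 0$ and hence $\beta = 0$. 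The main obstacle is the orientation bookkeeping that produces this sign flip: the identification $[X_i] = [\bar Y_i]$ in $H_4(W)$ (and the resulting sign change when passing to $[Y_i]$) is the crucial input, and everything else is either a direct application of~\eqref{eq:proj} or a standard use of the $h$-cobordism theorem in dimension seven.
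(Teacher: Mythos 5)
Your overall strategy is the same as the paper's: first use the product examples of Proposition~\ref{t:3folds} to cut the invariant combinations down to $\mathrm{span}(c_3,\,3c_1c_2-c_1^3)$, then kill the remaining combination with a pair of diffeomorphic projectivised rank-two bundles whose $p_1$ changes sign under the orientation flip built into the $h$-cobordism between $X_i$ and $\bar Y_i$. Your linear algebra and your sign computation $\Delta c_1^3=6\delta_i+4P$ are correct, and the $h$-cobordism argument for the total spaces is the same as in the paper.

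There is, however, one genuine gap: the phrase ``let $\mathcal{L}'$ be its restriction to $\bar Y_i$, \emph{viewed holomorphically} on $Y_i$.'' The extension of $\mathcal{L}$ over $W$ and its restriction to $\bar Y_i$ is only a topological complex line bundle; for it to carry a holomorphic structure on $Y_i$ its first Chern class must lie in the N\'eron--Severi group, i.e.\ be of type $(1,1)$. The homeomorphism of Theorem~\ref{t:MA} is produced by Freedman's theorem from intersection-form data and gives you no control over where an algebraic class of $X_i$ lands, and since the surfaces $Y_i$ are of general type with $p_g>0$, the N\'eron--Severi lattice is a proper sublattice of $H^2(Y_i;\Z)$; so for a generic choice of $\mathcal{L}$ the class $c_1(\mathcal{L}')$ need not be algebraic and $\PP(F)$ need not be a projective variety --- which is exactly what the theorem requires. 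The paper's construction is engineered to avoid this: since an oriented $S^2$-bundle over a simply connected four-manifold is determined by $(w_2,p_1)$ of its $SO(3)$-structure, one never needs the \emph{transported} bundle itself to be holomorphic, only \emph{some} holomorphic bundle realising the transported invariants. Starting from $\PP(TY_i)$ on the $Y$-side, the transported invariants on $X_i$ come out to be $(w_2(X_i),\,c_1^2(X_i))$ by~\eqref{eq:c12}, which are realised by $\OO(K)\oplus\OO$. Your argument is repaired by the same device: take $\mathcal{L}=\OO(K_{X_i})$, so that the transported invariants on $Y_i$ are $(w_2(Y_i),\,c_1^2(Y_i)-4c_2(Y_i))$, realised by the holomorphic tangent bundle $TY_i$ --- at which point you have reproduced the paper's proof. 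You should also record, as the paper does at the outset, that the diffeomorphism obtained from the $h$-cobordism reverses the complex orientations, so one must replace one complex structure by its conjugate (which preserves all Chern numbers) to land in the orientation-preserving setting of the statement.
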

\begin{proof}
First of all, let us dispose of the orientation question. If two complex three-folds are orientation-reversing
diffeomorphic with respect to the orientations given by their complex structures, then they become
orientation-preserving diffeomorphic after we replace one of the complex structures by its complex
conjugate. As the conjugate complex structure has the same Chern numbers as the original one, we
do not have to distinguish between orientation-preserving and orientation-reversing diffeomorphisms.

All the examples constructed in the proof of Proposition~\ref{t:3folds} have the property that 
$3c_{1}c_{2}-c_{1}^{3}$ agrees on $Z_{i}$ and $T_{i}$, as follows by combining~\eqref{eq:c12} 
with~\eqref{eq:Chern3} and the topological invariance of $c_{2}$ for surfaces. 
But, by Proposition~\ref{t:3folds}, linear combinations of $3c_{1}c_{2}-c_{1}^{3}$ 
and of $c_{3}$ are the only candidates left for combinations of Chern numbers 
that can be topological invariants of projective three-folds.
In order to show that $3c_{1}c_{2}-c_{1}^{3}$ is not an oriented diffeomorphism invariant
we shall use certain ruled manifolds which are non-trivial $\C P^1$-bundles, rather than
the products used above. 

Consider again a pair $X_i$ and $Y_i$ of simply connected algebraic surfaces as in Theorem~\ref{t:MA}.
For simplicity we just denote them by $X$ and $Y$, with orientations implicitly given by the complex 
structures. The oriented manifolds $X$ and $\bar Y$ are orientation-preserving h-cobordant. Let $M$ be the 
projectivisation of the holomorphic tangent bundle $TY$ of $Y$. 
Temporarily ignoring the complex structure of $M$, we think of it as a smooth oriented two-sphere bundle over $Y$,
or over $\bar Y$. If $W$ is any h-cobordism between $X$ and $\bar Y$, then the two-sphere bundle $M\longrightarrow\bar Y$
extends to a uniquely defined oriented two-sphere bundle $V\longrightarrow W$. Let $N$ be the restriction of this
bundle to $X\subset W$. If we give $N$ the orientation induced from that of $X$ and $M$ the orientation induced
from that of $\bar Y$, then, by construction, $V$ is an h-cobordism between $N$ and $M$. By Smale's h-cobordism
theorem, $M$ and $N$ are diffeomorphic.

Because the bundle $p\colon M\longrightarrow Y$ was defined as the projectivisation of the holomorphic tangent 
bundle of $Y$, its characteristic classes are $w_2(p)=w_2(Y)$ and $p_1(p)=c_1^2(Y)-4c_2(Y)$. Considered
as a bundle over $\bar Y$, $p$ has the same Stiefel--Whitney class, but the first Pontryagin number changes 
sign. It follows that $q\colon N\longrightarrow X$ has 
$$
p_1(q)=-c_1^2(Y)+4c_2(Y)=c_1^2(X) \ ,
$$
where the last equality is from~\eqref{eq:c12}. Moreover, $w_2(q)=w_2(X)$, although $X$ and $\bar Y$ are not
diffeomorphic. This follows for example from the cohomological characterisation of $w_2(X)$ as the unique 
element of $H^2(X;\Z_2)$ which for all $x$ satisfies
\[
w_2(X)\cdot x\equiv x^2 \pmod 2 \ .
\]

The bundle $q$ is determined by $w_2(q)=w_2(X)$ and $p_1(q)=c_1^2(X)$, and so we can think of it as the 
projectivisation of the holomorphic rank two bundle $\OO(K)\oplus\OO\longrightarrow X$. Therefore the total 
space $N$ inherits a complex-algebraic structure from that of $X$. Its Chern numbers are given by~\eqref{eq:proj}:
\begin{equation}
\begin{split}
c_3(N) &=2c_2(X) \ ,\\ 
c_1c_2(N) &=2(c_1^2(X)+c_2(X)) \ ,\\
c_1^3(N) &=8c_1^2(X) \ .
\end{split}
\end{equation}

This $N$ is diffeomorphic to $M$, which has a complex-algebraic structure as the projectivisation of the 
holomorphic tangent bundle of $Y$. (Recall from the beginning of the proof that we do not have to keep
track of the orientations induced by complex structures, because we can always replace a structure by
its complex conjugate.) The Chern numbers of $M$ are also given by~\eqref{eq:proj}:
\begin{equation}
\begin{split}
c_3(M) &=2c_2(Y) =2c_2(X) \ ,\\ 
c_1c_2(M) &=2(c_1^2(Y)+c_2(Y)) =2(-c_1^2(X)+5c_2(X)) \ ,\\
c_1^3(M) &=8c_1^2(Y)-8c_2(Y) =8(-c_1^2(X)+3c_2(X)) \ ,
\end{split}
\end{equation}
using~\eqref{eq:c12} to replace the Chern numbers of $Y$ by combinations of those of $X$.
Unlike for the examples in Proposition~\ref{t:3folds}, the combination $3c_1c_2-c_1^3$ is not the same 
for $M$ and $N$. This finally shows that $c_1c_2$ and $c_1^3$ vary independently (within a fixed
diffeomorphism type).
\end{proof}

Although the Chern numbers of a projective three-fold are not determined by the underlying differentiable manifold, 
this may still be the case up to finite ambiguity. By the Hirzebruch--Riemann--Roch theorem one has
\begin{equation}\label{eq:3RR}
\frac{1}{24}c_{1}c_{2} = 1 - h^{1,0} + h^{2,0} - h^{3,0} \ ,
\end{equation}
so that in the K\"ahler case $c_{1}c_{2}$ is bounded from above and 
from below by linear combinations of Betti numbers. In particular, 
for K\"ahler structures on a fixed $6$-manifold $c_{1}c_{2}$ can take 
at most finitely many values. We are left with the following:
\begin{prob}\label{prob2}
    Does $c_{1}^{3}$ take on only finitely many values on the projective 
    algebraic structures with the same underlying $6$-manifold?
    \end{prob}
The issue here is that there is no Riemann--Roch type formula 
expressing $c_{1}^{3}$ as a combination of Hodge numbers and the 
other Chern numbers.

For three-folds with ample canonical bundle one has $c_{1}^{3}<0$, and Yau's celebrated work~\cite{Y} 
gives $c_{1}^{3}\geq \frac{8}{3}c_{1}c_{2}$. As $c_{1}c_{2}$ is bounded below by a linear combination of 
Betti numbers, we have a positive answer to Problem~\ref{prob2} for this restricted class of projective three-folds. 
Even in the non-K\"ahler category, there are no examples where infinitely many values are known to arise
for $c_1^3$. 

\section{Higher dimensions}

It is now very easy to show that, except for the Euler number, no 
Chern number is diffeomorphism-invariant:
\begin{thm}\label{t:higher}
    For projective algebraic $n$-folds with $n\geq 3$ the only Chern 
    number $c_{I}$ which is diffeomorphism-invariant is the Euler number 
    $c_{n}$.
    \end{thm}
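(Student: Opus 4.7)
The plan is to generalise Proposition~\ref{t:3folds} by stabilising with $(\C P^1)^{n-2}$ in place of a single $\C P^1$, and then to verify combinatorially that every non-Euler Chern number varies on the resulting diffeomorphism class. Starting from a pair $X, Y$ from \thmref{t:MA}, set $M_X = X \times (\C P^1)^{n-2}$ and $M_Y = \bar Y \times (\C P^1)^{n-2}$: both are simply connected projective algebraic $n$-folds. Since $X$ and $\bar Y$ are orientation-preservingly homeomorphic simply connected four-manifolds, they are h-cobordant, and crossing such an h-cobordism with $(\C P^1)^{n-2}$ gives an h-cobordism between $M_X$ and $M_Y$. These are simply connected of real dimension $2n \geq 6$, so Smale's h-cobordism theorem produces a diffeomorphism. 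As at the opening of the proof of \thmref{tt:3folds}, the distinction between orientation-preserving and orientation-reversing diffeomorphisms does not matter, since complex conjugation preserves both Chern numbers and projectivity.

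The next step is to expand the Chern classes of the product. Iterating \lemref{l:products}, or equivalently expanding $c(TM_X) = c(TX) \prod_{i=1}^{n-2} (1 + 2 h_i)$ and using $h_i^2 = 0$, one finds that for every partition $I$ of $n$ the Chern number $c_I(M_X)$ equals $a_I \cdot c_2(X) + b_I \cdot c_1^2(X)$, where the integers $a_I, b_I$ depend only on $I$. The same formula applies to $M_Y$ with identical coefficients, but with $c_1^2(Y)$ replacing $c_1^2(X)$. Since $c_2(X) = c_2(Y)$ and $c_1^2(X) \neq c_1^2(Y)$, we obtain
\[
c_I(M_X) - c_I(M_Y) = b_I \bigl( c_1^2(X) - c_1^2(Y) \bigr) \ ,
\]
so the theorem reduces to showing $b_I \neq 0$ for every partition $I \neq (n)$ of $n$.

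The main step, and the only one requiring any real care, is the combinatorial positivity of $b_I$. Tracking where $c_1^2(X)$ appears in the expansion of $\prod_j c_{r_j}(M_X)$ for $I = (r_1, \ldots, r_k)$, it comes from monomials in which exactly two of the factors $c_{r_j}$ contribute $c_1(X)$ while all remaining factors contribute only $\C P^1$-classes. Counting yields $b_I$ as a sum of strictly positive multinomial coefficients indexed by the unordered pairs $\{j_1, j_2\}$ of parts of $I$. When $I = (n)$ has a single part this sum is empty, so $b_{(n)} = 0$, reflecting that $c_n$ is the topological Euler characteristic. As soon as $I$ has at least two parts, the sum contains at least one strictly positive term, forcing $b_I > 0$ and producing the required pair of diffeomorphic projective $n$-folds on which $c_I$ differs. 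The main obstacle is simply the bookkeeping required to pin down this combinatorial formula, after which positivity is manifest.
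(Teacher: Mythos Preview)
Your argument is correct and is essentially the paper's own proof: multiply the surface pairs from Theorem~\ref{t:MA} by $(\C P^1)^{n-2}$, apply Smale's h-cobordism theorem, and check that the coefficient $b_I$ of $c_1^2$ of the base surface is strictly positive for every partition $I\neq(n)$. The paper simply asserts this positivity while you spell out the combinatorics; one minor notational wrinkle is that writing $M_Y=\bar Y\times(\C P^1)^{n-2}$ and calling it projective algebraic is awkward since $\bar Y$ has no complex structure, but your appeal to complex conjugation makes clear that the algebraic structure is that of $Y\times(\C P^1)^{n-2}$.
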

Note that by Theorem~\ref{t:BLMS} this is false for $n=2$, because in 
that case $c_{1}^{2}$ is also diffeomorphism-invariant. On the other 
hand, by Theorem~\ref{t:MA} it is not homeomorphism-invariant, so that 
Theorem~\ref{t:higher} is true for $n=2$ if we replace 
diffeomorphism-invariance by homeomorphism-invariance. As in the case 
of Proposition~\ref{t:3folds}, the examples we exhibit in the proof of 
Theorem~\ref{t:higher} admit orientation-reversing diffeomorphisms, 
so that one cannot restore diffeomorphism-invariance of $c_{I}\neq 
c_{n}$ by restricting to orientation-preserving diffeomorphisms only.
\begin{proof}
    For $n=3$ this was already proved in Proposition~\ref{t:3folds}. For 
    $n>3$ we take the examples $T_{i}$ and $Z_{i}$ from Proposition~\ref{t:3folds} 
    and multiply them by $n-3$ copies of $\C P^{1}$. Call these products 
    $T_{i}'$ and $Z_{i}'$. Using formula~\eqref{eq:Chern2} and induction, 
    we see that, on the one hand, $c_{n}$ is a universal multiple of the 
    $c_{2}$ of the surfaces we started with. On the other hand, 
    $c_{1}^{n}(T_{i}')$ and $c_{1}^{n}(Z_{i}')$ are universal multiples of 
    $c_{1}^{2}(X_{i})$ and of $c_{1}^{2}(Y_{i})$ respectively, and so 
    are different. All other Chern numbers $c_{I}$ are universal 
    linear combinations of $c_{2}(X_{i})$ and $c_{1}^{2}(X_{i})$, 
    respectively $c_{2}(Y_{i})$ and $c_{1}^{2}(Y_{i})$, with 
    the coefficients of both $c_{2}$ and $c_{1}^{2}$ strictly positive. 
    As $X_{i}$ and $Y_{i}$ have the same $c_{2}$ but different 
    $c_{1}^{2}$, the result follows.
    \end{proof}

Although the individual Chern numbers are not diffeomorphism-invariant, certain 
linear combinations are invariant once we restrict to orientation-preserving 
diffeomorphisms. Of course, as remarked by Hirzebruch~\cite{Hir2}, the Pontryagin 
numbers $p_{J}$ have this invariance property\footnote{Note that, unlike the 
Euler number, the Pontryagin numbers change sign under a change of orientation.}, 
but this only helps when the complex dimension is even.

\begin{prob}
Prove that, in arbitrary dimensions, the only combinations of Chern numbers
that are invariant under orientation-preserving diffeomorphisms of smooth complex projective 
varieties are linear combinations of Euler and Pontryagin numbers.
\end{prob}
For complex dimension $3$ this is Theorem~\ref{tt:3folds} above. 
Theorem~\ref{tt:4folds} below deals with the case of complex dimension $4$.

It would be interesting to know whether each of the Chern numbers 
$c_{I}\neq c_{n}$ takes on only finitely many values on a fixed smooth manifold. 
In the K\"ahler case this is known to be true for $c_{1}c_{n-1}$ by a result of 
Libgober and Wood, who showed that this Chern number is always a linear 
combination of Hodge numbers, see Theorem~3 in~\cite{LW}. In the non-K\"ahler 
case $c_{1}c_{n-1}$ can take on infinitely many values on a fixed manifold. 
This follows as in the proof of Theorem~\ref{t:higher} by taking products 
of LeBrun's examples~\cite{LeB} mentioned in the previous section with $\C P^{1}$, and 
using formula~\eqref{eq:Chern2}. Because $c_{1}c_{2}$ takes on infinitely many 
values on a fixed $6$-manifold, the same conclusion holds for $c_{1}c_{n-1}$ 
in real dimension $2n\geq 8$.

Returning to the K\"ahler case, the Riemann--Roch theorem expresses the 
$\chi_{p}$-genus\footnote{Our notation is consistent with~\cite{S}, 
changing the traditional superscript in $\chi^{p}$ from~\cite{TMAG} 
to a subscript.}
$$
\chi_{p} = \sum_{q=0}^{n} (-1)^{q}h^{p,q}
$$
as a linear combination of Chern numbers, and it follows that the 
combinations of Chern numbers which appear in this way can take on 
only finitely many values on a fixed manifold, as they are bounded 
above and below by linear combinations of Betti numbers.

\begin{rem}
    If the complex dimension $n$ is odd, then the Todd genus expressing 
    the Euler characteristic $\chi_{0}=(-1)^{n}\chi_{n}$ of the structure 
    sheaf as a combination of Chern numbers does not involve $c_{1}^{n}$. 
    This follows from the {\it Bemerkungen} in Section~1.7 of~\cite{TMAG}. 
    On the one hand, in any dimension, the coefficient of $c_{n}$ in the 
    Todd genus agrees with the coefficient of $c_{1}^{n}$. On the other 
    hand, the Todd genus is divisible by $c_{1}$ if $n$ is odd.
    \end{rem}
Generalising this Remark, and what we saw for $n=3$ in the previous 
section, we now prove:
\begin{prop}\label{t:S}
    If $M$ is K\"ahler of odd complex dimension $n>1$, then all 
    $\chi_{p}$ are linear combinations of Chern numbers which do not 
    involve $c_{1}^{n}$.
    \end{prop}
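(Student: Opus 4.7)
The approach is to write $\chi_y(M)=\sum_{p}\chi_p(M)y^p$ via Hirzebruch's formula and read off the coefficient of $c_1^n$ directly from the generating series. For K\"ahler $M$ of complex dimension $n$, Hodge theory gives $\chi_p(M)=\chi(M,\Omega^p_M)$, so by Hirzebruch--Riemann--Roch
\[
\chi_y(M) \;=\; \int_M \prod_{i=1}^n Q_y(x_i), \qquad Q_y(x) \;=\; \frac{x(1+ye^{-x})}{1-e^{-x}} \ ,
\]
where $x_1,\dots,x_n$ are the Chern roots of $TM$. Rewriting $Q_y(x)=x+(1+y)\frac{x}{e^x-1}$ and using the Bernoulli expansion $\frac{x}{e^x-1}=1-\frac{x}{2}+\sum_{k\geq 1}\frac{B_{2k}}{(2k)!}x^{2k}$ gives
\[
Q_y(x) \;=\; (1+y) \;+\; \frac{1-y}{2}\,x \;+\; (1+y)\sum_{k\geq 1}\frac{B_{2k}}{(2k)!}\,x^{2k} \ .
\]
The key observations are that $Q_y(0)=1+y$ and that $Q_y(x)$ contains no odd power $x^m$ with $m\geq 3$.

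Next I would extract the coefficient of $c_1^n$ in the degree-$n$ component of $\prod_i Q_y(x_i)$. Viewed as a polynomial in the elementary symmetric functions $c_j=e_j(x_1,\dots,x_n)$, this component is isobaric of weighted degree $n$ (with $c_j$ carrying weight $j$), so substituting $c_2=\cdots=c_n=0$ kills every monomial except $c_1^n$ and therefore returns exactly the coefficient of $c_1^n$. At the level of Chern roots this substitution corresponds to $x_1=c_1$ and $x_2=\cdots=x_n=0$, under which
\[
\prod_{i=1}^{n} Q_y(x_i) \;=\; Q_y(c_1)\cdot Q_y(0)^{n-1} \;=\; (1+y)^{n-1}\,Q_y(c_1) \ .
\]
Its degree-$n$ part equals $(1+y)^{n-1}\,[x^n]Q_y(x)\cdot c_1^n$, so the coefficient of $c_1^n$ in $\chi_y(M)$ as a polynomial in Chern numbers is precisely $(1+y)^{n-1}\,[x^n]Q_y(x)$.

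For odd $n>1$ the expansion above gives $[x^n]Q_y(x)=0$, so this coefficient vanishes identically in $y$. Reading off the coefficient of $y^p$ then yields the desired conclusion: each $\chi_p$ is a linear combination of Chern numbers not involving $c_1^n$. The only step that requires a little care is the coefficient-extraction argument --- verifying that the substitution $c_2=\cdots=c_n=0$ really does isolate the $c_1^n$-coefficient in an isobaric polynomial of weighted degree $n$ --- and beyond that the argument is a short generating-function calculation, so it presents no serious obstacle.
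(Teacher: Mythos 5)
Your proof is correct, and it takes a genuinely different route from the paper. The paper first uses the K\"ahler symmetry $\chi_{p}=(-1)^{n}\chi_{n-p}$ to reduce to $p>\frac{n}{2}$, and then runs a descending induction on $p$ powered by Salamon's result that $\sum_{p=k}^{n}(-1)^{p}\binom{p}{k}\chi_{p}$ involves only Chern numbers containing some $c_{i}$ with $i>n-2[\frac{k}{2}]$. You instead package all the $\chi_{p}$ into the $\chi_{y}$-genus and compute the coefficient of $c_{1}^{n}$ in one stroke: the identity $Q_{y}(x)=x+(1+y)\frac{x}{e^{x}-1}$ is right, the substitution $c_{2}=\cdots=c_{n}=0$ (equivalently $x_{1}=c_1$, $x_{2}=\cdots=x_{n}=0$) does isolate the $c_{1}^{n}$-coefficient of an isobaric weight-$n$ polynomial because the elementary symmetric functions are algebraically independent, and the vanishing of odd Bernoulli terms of degree $\geq 3$ finishes the argument. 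Your approach buys several things: it is self-contained (only Hirzebruch--Riemann--Roch, no appeal to Salamon), it actually computes the $c_{1}^{n}$-coefficient of $\chi_{y}$ as $(1+y)^{n-1}[x^{n}]Q_{y}(x)$ in every dimension (giving $(1+y)^{n}B_{n}/n!$ for even $n$), and it never invokes the K\"ahler symmetry, so the conclusion holds for the Riemann--Roch expression of $\chi(M,\Omega^{p})$ on any compact complex manifold. What the paper's route buys in exchange is the finer information from Salamon's corollary about which higher Chern classes must occur in these combinations, not merely the absence of $c_{1}^{n}$. One cosmetic remark: you should make explicit that $n>1$ enters because $[x^{1}]Q_{y}(x)=\frac{1-y}{2}\neq 0$, so the case $n=1$ genuinely fails and your parity argument correctly starts at $n=3$.
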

This shows that for odd $n$ there is no general way to extract the value 
of $c_{1}^{n}$ from the Hodge numbers. In particular, one can not 
obtain a finiteness result for $c_{1}^{n}$ in this way.
\begin{proof}
    The K\"ahler symmetries imply $\chi_{p}=(-1)^{n}\chi_{n-p}$, so 
    that it is enough to prove the claim for $p>\frac{n}{2}$. We shall 
    do this by descending induction starting at $p=n$.
    
    Salamon~\cite{S} proved that for $2\leq k\leq n$ the number
    \begin{equation}\label{eq:S}
    \sum_{p=k}^{n}(-1)^{p}{p\choose k}\chi_{p}
    \end{equation}
    is a linear combination of Chern numbers each of which involves a 
    $c_{i}$ with $i>n-2[\frac{k}{2}]$, see~\cite{S} Corollary~3.3.
    Using this for $n$ odd and $k=n$, we obtain once more the claim for 
    $\chi_{n}$ treated already in the Remark above.
    
    Suppose now that the claim has been proved for $\chi_{n}$, 
    $\chi_{n-1}$, \ldots, $\chi_{j}$ with $j>\frac{n}{2}+1$. Then we 
    consider~\eqref{eq:S} with $k=j-1$. (Note that this still satisfies 
    $k\geq 2$.) As $\chi_{p}$ with $p\geq j$ does not involve 
    $c_{1}^{n}$ by the induction hypothesis, Salamon's result implies 
    that $\chi_{j-1}$ does not involve $c_{1}^{n}$ either.
    \end{proof}
    
\section{Four-folds}    
    
In the case of four-folds, in addition to the Euler number $c_{4}$, the 
following are invariants of the underlying oriented smooth manifold:
\begin{equation}
    \begin{split}
	p_{1}^{2} &= (c_{1}^{2}-2c_{2})^{2} = 
	c_{1}^{4}-4c_{1}^{2}c_{2}+4c_{2}^{2}\\
	p_{2} &= c_{2}^{2}-2c_{1}c_{3}+2c_{4} \ .
	\end{split}
    \end{equation}
The vector space of Chern numbers of four-folds is $5$-dimensional, 
containing the $3$-dimensional subspace spanned by $c_{4}$, 
$p_{1}^{2}$ and $p_{2}$. It turns out that all combinations of Chern 
numbers that are invariant under orientation-preserving diffeomorphisms
are contained in this subspace:
\begin{thm}\label{tt:4folds}
The only linear combinations of Chern numbers that are invariant under 
orientation-preserving diffeomorphisms of simply connected projective 
algebraic four-folds are linear combinations of the Euler characteristic and
of the Pontryagin numbers. 
\end{thm}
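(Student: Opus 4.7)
As in the proof of Theorem~\ref{tt:3folds}, complex conjugation preserves Chern numbers, so we may restrict attention to orientation-preserving diffeomorphisms. The space of Chern numbers of a complex four-fold is five-dimensional with basis $c_1^4$, $c_1^2c_2$, $c_1c_3$, $c_2^2$, $c_4$, and the three-dimensional subspace $P\subset\R^5$ spanned by $c_4$, $p_1^2$ and $p_2$ consists entirely of orientation-preserving diffeomorphism invariants. Hence it suffices to exhibit two pairs of diffeomorphic simply connected projective algebraic four-folds whose Chern-number difference vectors span a two-dimensional subspace of $\R^5$; by construction such difference vectors lie in the two-dimensional annihilator of $P$, and any two independent ones fill it out.

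The first pair is $Z_i\times\C P^1$ and $T_i\times\C P^1$, where $Z_i$ and $T_i$ are the diffeomorphic three-folds from Proposition~\ref{t:3folds}. These are simply connected projective algebraic four-folds, and a second application of Lemma~\ref{l:products} expresses all of their Chern numbers as universal linear combinations of $c_1^2$ and $c_2$ of the underlying surfaces $X_i$ and $Y_i$. Since $c_2(X_i)=c_2(Y_i)$, while $p(\C P^1)=1$ forces $p_1^2$ and $p_2$ of both four-folds to vanish, the four Chern numbers $c_1^4,c_1^2c_2,c_1c_3,c_2^2$ all differ by a common nonzero scalar multiple of $c_1^2(X_i)-c_1^2(Y_i)$, and $c_4$ agrees. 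This produces a difference vector $D_1$ proportional to $(48,20,4,8,0)$ in the coordinates above.

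For the second pair, take $M_i\times\C P^1$ and $N_i\times\C P^1$, where $M_i=\PP(TY_i)$ and $N_i=\PP(\OO(K_{X_i})\oplus\OO)$ are the diffeomorphic three-folds constructed in the proof of Theorem~\ref{tt:3folds}. Their Chern numbers are supplied by formulae~\eqref{eq:proj}, and a further application of Lemma~\ref{l:products} delivers the Chern numbers of the product four-folds. Using~\eqref{eq:c12} to rewrite the Chern numbers of $Y_i$ in terms of those of $X_i$ yields an explicit second difference vector $D_2$. A direct inspection, for instance of the $2\times 2$ minor in the first two coordinates, shows that $D_1$ and $D_2$ are linearly independent whenever $c_1^2(X_i)\neq 0$, which holds automatically since the $X_i$ are of general type.

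The main obstacle is conceptual rather than computational: the bare product $X_i\times(\C P^1)^2$ only detects a single direction of variation, namely that of $c_1^2(X_i)-c_1^2(Y_i)$, so a second, genuinely different example is needed to fill out the two-dimensional quotient. The non-trivial projectivisations $M_i,N_i$ provide exactly the missing direction, essentially because the proof of Theorem~\ref{tt:3folds} already exhibits independent variation of $c_1^3$ and $c_1c_2$ at the three-fold level, and this independence is preserved under multiplication by $\C P^1$ via Lemma~\ref{l:products}. Once $D_1$ and $D_2$ are in hand, the verification that their span is complementary to $P$ in $\R^5$ is a routine linear-algebra check.
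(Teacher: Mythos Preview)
Your proposal is correct and follows essentially the same route as the paper: take the two kinds of diffeomorphic three-fold pairs from Proposition~\ref{t:3folds} and the proof of Theorem~\ref{tt:3folds}, multiply each by $\C P^1$, and check that the resulting Chern-number difference vectors remain linearly independent, forcing the invariant subspace to have codimension exactly two. The paper streamlines the computation by recording once and for all that a three-fold difference vector $(0,a,b)$ in $(c_3,c_1c_2,c_1^3)$ becomes $(0,2a,4a,4a+2b,8b)$ in $(c_4,c_1c_3,c_2^2,c_1^2c_2,c_1^4)$ after taking $\times\,\C P^1$, so that linear independence in dimension three automatically propagates to dimension four; your explicit vector $(48,20,4,8,0)$ is the image of $(0,2,6)$ under this map, and the independence of $D_1,D_2$ also requires the nonzero-signature hypothesis from Theorem~\ref{t:MA} (not only $c_1^2(X_i)\neq 0$), which you are implicitly using.
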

\begin{proof}
This is a rather formal consequence of our results for complex three-folds.
Consider the vector space of Chern number triples $(c_3,c_2c_1,c_1^3)$.
Whenever we have a smooth six-manifold with two different complex structures,
the difference of the two Chern vectors must be in the kernel of any linear 
functional corresponding to a topologically invariant combination of Chern numbers.
In the proof of Theorem~\ref{tt:3folds} we produced two kinds of examples for
which these difference vectors were linearly independent. Therefore the space of 
topologically invariant combinations of Chern numbers is at most one-dimensional,
and as it contains $c_3$ it is precisely one-dimensional.

Consider now the four-folds obtained by multiplying the three-dimensional 
examples by $\C P^1$. If the difference of Chern vectors in a three-dimensional
example is $(0,a,b)$, then by~\eqref{eq:Chern2} the difference of Chern vectors
$(c_4,c_1c_3,c_2^2,c_1^2c_2,c_1^4)$ for the product with $\C P^1$ is 
$(0,2a,4a,4a+2b,8b)$. Two examples in dimension three with linearly
independent difference vectors lead to examples in dimension four which 
also have linearly independent difference vectors. Thus, in the five-dimensional 
space spanned by the Chern numbers of complex projective four-folds, the subspace invariant
under orientation-preserving diffeomorphisms has codimension at least two.
As it contains the linearly independent elements $c_4$, $p_1^2$ and $p_2$, it
is exactly three-dimensional.
\end{proof}

Concerning the weaker question which Chern numbers of projective or 
K\"ahler four-folds are determined by the topology up to finite ambiguity, 
this is so for $c_{1}c_{3}$ on general grounds, see the discussion above 
and~\cite{LW,S}. The formula for $p_{2}$ then shows that $c_{2}^{2}$ 
is also determined up to finite ambiguity. Using either the formula 
for $p_{1}^{2}$ or the Riemann--Roch formula for the structure sheaf, 
we conclude that $c_{1}^{4}-4c_{1}^{2}c_{2}$ is also determined up to 
finite ambiguity, but it is not clear whether this is true for 
$c_{1}^{4}$ and $c_{1}^{2}c_{2}$ individually. Note that a negative 
answer to Problem~\ref{prob2}, giving infinitely many values for 
$c_{1}^{3}$ on a fixed $6$-manifold, would show that $c_{1}^{4}$ also 
takes on infinitely many values on a fixed $8$-manifold by taking products 
with $\C P^{1}$.

For four-folds with ample canonical bundle one has $c_{1}^{4}>0$, and Yau's 
work~\cite{Y} gives $c_{1}^{4}\leq \frac{5}{2}c_{1}^{2}c_{2}$. Therefore
$$
0<c_{1}^{4}\leq\frac{5}{3}(4c_{1}^{2}c_{2}-c_{1}^{4}) \ .
$$
As the right-hand side takes on only finitely many values, the same is 
true for $c_{1}^{4}$, and then for $c_{1}^{2}c_{2}$ as well.

\begin{rem}
    Pasquotto~\cite{P} recently raised the question of 
    the topological invariance of Chern numbers of symplectic 
    manifolds, particularly in (real) dimensions $6$ and $8$. Our 
    results for K\"ahler manifolds of course show that Chern numbers 
    of symplectic manifolds are not topological invariants. In the 
    K\"ahler case we have used Hodge theory to argue that the 
    variation of Chern numbers is quite restricted, often to finitely 
    many possibilities. It would be interesting to know whether any 
    finiteness results hold in the symplectic non-K\"ahler category.
    \end{rem}
    
\bibliographystyle{amsplain}

\begin{thebibliography}{10}

\bibitem{BH}
A.~Borel and F.~Hirzebruch, {\em Characteristic classes and 
homogeneous spaces, II}, Amer.~J.~Math.~{\bf 81} (1959), 315--382.

\bibitem{Hir1}
F.~Hirzebruch,
{\em Some problems on differentiable and complex manifolds},
Ann.~Math.~{\bf 60} (1954), 213--236.

\bibitem{TMAG}
F.~Hirzebruch,
{\sl Neue topologische Methoden in der algebraischen Geometrie}, 
2.~erg\"anzte Auflage, Springer Verlag 1962.

\bibitem{Hir2}
F.~Hirzebruch,
{\sl Gesammelte Abhandlungen}, Band I, Springer-Verlag 1987.

\bibitem{MAorient}
D.~Kotschick, {\em Orientation--reversing homeomorphisms in surface
geography}, Math. Annalen {\bf 292} (1992), 375--381.

\bibitem{BLMSorient}
D.~Kotschick, {\em Orientations and geometrisations of compact
complex surfaces}, Bull.~London Math.~Soc.~{\bf 29} (1997), 145--149.

\bibitem{LeB}
C.~LeBrun, {\em Topology versus Chern numbers for complex $3$-folds}, 
Pacific J.~Math.~{\bf 191} (1999), 123--131.

\bibitem{LW}
A.~S.~Libgober and J.~W.~Wood, {\em Uniqueness of the complex
structure on K\"ahler manifolds of certain homotopy types},
J.~Differential Geometry {\bf 32} (1990), 139--154.

\bibitem{P}
F.~Pasquotto, {\em Symplectic geography in dimension $8$}, 
Manuscripta math.~{\bf 116} (2005), 341--355. 

\bibitem{S}
S.~M.~Salamon, {\em On the cohomology of K\"ahler and hyper-K\"ahler 
manifolds}, Topology~{\bf 35} (1996), 137--155.

\bibitem{Y}
S.-T.~Yau, {\em Calabi's conjecture and some new results in algebraic 
geometry}, Proc.~Natl. Acad.~Sci.~USA {\bf 74} (1977), 1798--1799.

\end{thebibliography}

\bigskip

\end{document}